\theoremstyle{plain}
\newtheorem{theorem}{Theorem}[section]
\newtheorem{lemma}[theorem]{Lemma}
\newtheorem{corollary}[theorem]{Corollary}
\theoremstyle{definition}
\DeclareMathOperator{\TV}{TV}
\DeclareMathOperator{\Bin}{Bin}
\DeclareMathOperator{\Ber}{Ber}
\DeclareMathOperator{\Law}{\mathcal{L} }
\DeclareMathOperator{\Var}{Var}
\newcommand{\R}{\mathbb{R}}
\newcommand{\N}{\mathbb{N}}
\newcommand{\Z}{\mathbb{Z}}
\newcommand{\E}{\mathbb{E}}
\renewcommand{\P}{\mathbb{P}}
\newcommand{\beq}{\begin{eqnarray*}}
\newcommand{\eeq}{\end{eqnarray*}}
\newcommand{\beqn}{\begin{eqnarray}}
\newcommand{\eeqn}{\end{eqnarray}}
\newcommand{\eqdef}{:=}
\newcommand{\paren}[1]{\left( #1 \right)}
\newcommand{\nrm}[1]{\left\Vert #1 \right\Vert}
\newcommand{\set}[1]{\left\{ #1 \right\}}
\newcommand{\eps}{\varepsilon}
\newcommand{\mypoi}[1]{S_{#1}}
\newcommand{\mybin}[2]{\Bin(#1,#2)}
\newcommand{\etaBCV}{\eta_0}
\newcommand{\CBCV}{C_0}
\newcommand{\hide}[1]{}
\renewcommand{\vec}[1]{\textup{{\bf #1}}}
\title[TV homogenization inequalities]{TV homogenization inequalities}
\author[A.~Kontorovich]{Aryeh Kontorovich}
\address{Ben-Gurion University of the Negev}
\email{karyeh@cs.bgu.ac.il}
\keywords{total variation; homogenization; Poisson binomial; Bernoulli product measures}
\subjclass[2020]{60E15, 60F05}
\date{} %
\begin{document}

\begin{abstract}
We study the total variation distance under two information-erasing maps on inhomogeneous Bernoulli product measures: summation and homogenization. While summation is a Markov kernel and hence satisfies the usual data processing inequality, homogenization --- which maps each Bernoulli parameter to the cumulative mean --- is not. Nevertheless, we prove that the homogenization map also reduces the TV distance, up to a universal constant. The argument is based on an explicit two-sided control of the TV distance between Poisson binomials, obtained via a parameter interpolation and a second-moment extraction.
\end{abstract}

\maketitle

\section{Introduction}

For $\vec{p}=(p_1,\dots,p_n)\in[0,1]^n$,
the
\emph{Poisson--binomial}
law, introduced by
\cite{Poisson1837}\footnote{
See \cite{PoissonSheynin2019} for a modern English translation.
},
is
the pushforward of the product measure
$
\Ber(\vec{p})
:=
\Ber(p_1)\otimes\cdots\otimes\Ber(p_n)$ under the sum map
$x\mapsto\sum_i x_i$.
Equivalently, if
$X_i\sim\Ber(p_i)$ are independent,
then
$S_{\vec{p}}:=\sum_{i=1}^n X_i$
is distributed according to this law.
The sum map is easily seen to be a Markov kernel,
and hence the Data Processing Inequality
\cite[Theorem 7.4]{Polyanskiy_Wu_2024}
applies:
$
\TV(\Ber(\vec{p}),\Ber(\vec{q}))
\ge
\TV(S_{\vec{p}},S_{\vec{q}})
$, where $\TV$ is the total variation distance.
Intuitively, since information is lost under the sum map,
the distributions can only become closer to each other.
In this paper, we investigate
the behavior of TV under
the sum map at the {\em parameter} rather than the {\em observation} level.
The {\em homogenization} map replaces each $p_i$
with
$\bar p=\frac1n\sum_i p_i$,
transforming $\Ber(\vec{p})$
into
$
\Ber(\bar p)^{\otimes n}
$.
Since this operation also erases information,
we might likewise expect it to decrease the TV distance.
One immediate obstruction is that
the homogenization map
cannot be realized by any Markov kernel.
Indeed,
suppose to the contrary that
for $n\ge2$,
some Markov kernel $K$
maps $X\sim \Ber(\vec{p})$ to
$
Y\sim
\Ber(\bar p)^{\otimes n}
$.
Applying the sum map to $Y$
yields
$Z\sim
\mybin{n}{\bar p}
$;
thus we have some Markov kernel $\tilde K:
\Ber(\vec{p})
\mapsto
\mybin{n}{\bar p}
$
for all $\vec{p}\in[0,1]^n$.
Consider a $\vec{p}$ of the form $\vec{p}=(t,0,0,\ldots,0)\in[0,1]^n$.
Then the law of $X$ is affine in $t$ (it is a $t$-convex combination of two atoms), as is the pushforward measure
under $\tilde K$.
However, $
\P(Z=2)=\binom{n}{2}\paren{\frac{t}{n}}^2\paren{1-\frac{t}{n}}^{n-2}
$, which is not affine in $t$.\footnote{
For the special case of {\em complementary} $\vec p,\vec q$,
meaning that $\vec q=\vec 1-\vec p$, it turns out that the homogenization operator can be realized as a Markov kernel;
see Appendix.
}

This indicates that the standard information-theoretic argument for proving
data processing inequalities does not apply, necessitating novel tools.
One such tool we propose
analyzes
the behavior of
the Poisson binomial
$S_{\vec{p}}$
under the total variation metric $\TV$.
For $\vec{p},\vec{q}\in[0,1]^n$, define the quantities
\beqn
\label{eq:delsigphi}
\Delta:=
\sum_{i=1}^n|p_i-q_i|
,
\quad
\sigma_{\vec{p}}^2:=\Var(S_{\vec{p}})=\sum_{i=1}^n p_i(1-p_i),
\quad
\Phi(\vec{p},\vec{q}):=
\min\!\left(1,\frac{\Delta}{\sqrt{\sigma_{\vec{p}}^2+1}}\right)
.
\eeqn
We discover
a simple analytic approximation of $\TV(S_{\vec{p}},S_{\vec{q}})$
for {\em dominating pairs} $\vec{p},\vec{q}\in[0,1]^n$, $p_i\ge q_i$,
up to universal multiplicative constants:
\beqn
\label{eq:TV-Phi}
\TV(S_{\vec{p}},S_{\vec{q}})\asymp\Phi(\vec{p},\vec{q}),
\qquad \vec{p},\vec{q}\in[0,1]^n, \vec{p}\ge \vec{q}.
\eeqn
For general $\vec{p},\vec{q}\in[0,1]^n$,
only the upper bound
$\TV(S_{\vec{p}},S_{\vec{q}})\lesssim\Phi(\vec{p},\vec{q})$ holds, which
can be sharpened to $\min(\Phi(\vec{p},\vec{q}),\Phi(\vec{q},\vec{p}))$ by symmetry.
Unlike
$\TV(\Ber(\vec{p}),\Ber(\vec{q}))$,
whose exact computation
is infeasible (see below),
$\TV(S_{\vec{p}},S_{\vec{q}})$
is computable
exactly
in time $O(n^2)$ via a simple recurrence
\cite[Section 5]{TangTang2023}\hide
{
One initializes
$f_0(0)=1$ and $f_0(k)=0$ for $k\neq 0$
and updates
\[
f_i(k)=(1-p_i)\,f_{i-1}(k)+p_i\,f_{i-1}(k-1),
\]
where by convention $f_{i-1}(-1)=0$ and $f_{i-1}(i)=0$.
After $n$ steps, $f_n(k)=\P(S_{\vec{p}}=k)$. An analogous calculation
is carried out for $S_{\vec{q}}$, from which $TV(S_{\vec{p}},S_{\vec{q}})$ is trivial to compute.
}.
Still,
one should not expect any analytically tractable analogue of
\eqref{eq:TV-Phi} for general $\vec{p},\vec{q}$ due to cancellations and convolutional smoothing.
As an application of these results, we prove the
TV homogenization inequality for all $\vec{p},\vec{q}\in[0,1]^n$:
\beqn
\label{eq:tv-homog}
\TV(\Ber(\vec{p}),\Ber(\vec{q}))
&\gtrsim&
\TV(
\Ber(\bar p)^{\otimes n}
,
\Ber(\bar q)^{\otimes n}
)
=
\TV(\mybin{n}{\bar p},\mybin{n}{\bar q})
.
\eeqn

\medskip\noindent\textbf{Notation.}
We write $[n]:=\set{1,\ldots,n}$.
For two distributions $P,Q$ on
a finite set
$\Omega$, their total variation
distance is defined by
\beqn
\label{eq:tv-def}
\TV(P,Q)
&=&
\frac12\sum_{\omega\in\Omega}|P(\omega)-Q(\omega)|
.
\eeqn
If $X,Y$ are random variables with laws $P,Q$, respectively,
then we write $\TV(X,Y):=\TV(P,Q)$.
For $p\in[0,1]$,
$\Ber(p)$
denotes
the Bernoulli measure on $\set{0,1}$:
$\Ber(p)(0)=1- p$
and
$\Ber(p)(1)=p$.
For $n\in\N$ and $\vec{p}
\in[0,1]^n$,
$\Ber(\vec{p})$ denotes
the product of $n$ Bernoulli distributions with parameters $p_i$:
$
\Ber(\vec{p}) = \Ber(p_1) \otimes \Ber(p_2)\otimes \ldots \otimes \Ber(p_n).
$
The Poisson binomial $S_{\vec{p}}$ is a sum of $n$ independent $\Ber(p_i)$
variables.
When all of the $p_i$ are identical (say, to $\bar p$),
this is the binomial distribution $\mybin{n}{\bar p}$.
The notation in \eqref{eq:delsigphi}
will persist throughout the paper.

\medskip\noindent\textbf{Main results.}
Our first result implies the upper bound in \eqref{eq:TV-Phi}.
Let $\etaBCV$ denote the sharp universal constant from
\cite[Theorem~1]{BAILLON_COMINETTI_VAISMAN_2016},
and set
\[
\CBCV \eqdef \sqrt{\frac54+\etaBCV^2}.
\]
Numerically (see \cite[Remark~1]{BAILLON_COMINETTI_VAISMAN_2016}),
$\etaBCV\approx 0.4688223555$ and hence $\CBCV\approx 1.2123507747$.
\begin{theorem}
\label{thm:PB-upper-Phi}
For $\vec{p},\vec{q}\in[0,1]^n$
and $\Phi$ as in \eqref{eq:delsigphi},
\beq
\TV(S_{\vec{p}},S_{\vec{q}})
&\le&
2\CBCV\,\min(
\Phi(\vec{p},\vec{q})
,
\Phi(\vec{q},\vec{p})
)
.
\eeq
\end{theorem}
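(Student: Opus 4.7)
The plan is to telescope the TV distance along a coordinate-by-coordinate hybrid path, reduce each single-coordinate swap to a shift-TV of a Poisson binomial, and invoke the sharp anti-concentration of \cite[Theorem~1]{BAILLON_COMINETTI_VAISMAN_2016}. By the symmetry built into the minimum on the right-hand side, it suffices to prove $\TV(S_{\vec{p}},S_{\vec{q}})\le \CBCV\,\Phi(\vec{p},\vec{q})$; the analogous bound with $\vec{p}$ and $\vec{q}$ swapped then follows by relabelling. The case $\Phi(\vec{p},\vec{q})=1$ is immediate since $\TV\le 1\le \CBCV$, so I will assume $\Delta<\sqrt{\sigma_{\vec{p}}^2+1}$ and aim at $\TV(S_{\vec{p}},S_{\vec{q}})\le \CBCV\,\Delta/\sqrt{\sigma_{\vec{p}}^2+1}$.

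Define the hybrid vectors $\vec{r}^{(k)}\eqdef(q_1,\ldots,q_k,p_{k+1},\ldots,p_n)$, so that $\vec{r}^{(0)}=\vec{p}$ and $\vec{r}^{(n)}=\vec{q}$, and let $T^{(k)}$ denote the Poisson binomial on the $n-1$ parameters obtained from $\vec{r}^{(k-1)}$ by deleting coordinate $k$. Decomposing $S_{\vec{r}^{(k-1)}}=T^{(k)}+X_k$ and $S_{\vec{r}^{(k)}}=T^{(k)}+Y_k$ with $X_k\sim\Ber(p_k)$ and $Y_k\sim\Ber(q_k)$ independent of $T^{(k)}$, the identity
\[
\P(T^{(k)}+X_k=j)-\P(T^{(k)}+Y_k=j) = (p_k-q_k)\bigl[\P(T^{(k)}=j-1)-\P(T^{(k)}=j)\bigr]
\]
gives $\TV(S_{\vec{r}^{(k-1)}},S_{\vec{r}^{(k)}}) = |p_k-q_k|\cdot\TV(T^{(k)},T^{(k)}+1)$. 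Since Poisson binomials are unimodal (Darroch), the successive-difference telescope collapses to $\TV(T^{(k)},T^{(k)}+1)=\max_j\P(T^{(k)}=j)$, and the triangle inequality yields
\[
\TV(S_{\vec{p}},S_{\vec{q}}) \le \sum_{k=1}^n |p_k-q_k|\cdot\max_j\P(T^{(k)}=j).
\]
The BCV bound then gives $\max_j\P(T^{(k)}=j)\le \etaBCV/\sqrt{\sigma_{T^{(k)}}^2+1/4}$, which we combine with the trivial bound $\max_j\P(T^{(k)}=j)\le 1$ to obtain $\min\!\bigl(1,\etaBCV/\sqrt{\sigma_{T^{(k)}}^2+1/4}\bigr)$ per step.

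The main obstacle---the \emph{second-moment extraction} step---is to pass from the per-step denominator $\sigma_{T^{(k)}}^2+1/4$ to the uniform $\sigma_{\vec{p}}^2+1$ at the single constant $\CBCV$. Using the identity $\sigma_{T^{(k)}}^2=\sigma_{\vec{p}}^2-p_k(1-p_k)+\sum_{j<k}[q_j(1-q_j)-p_j(1-p_j)]$ together with $|p(1-p)-q(1-q)|\le|p-q|$ and $p_k(1-p_k)\le 1/4$, I get $\sigma_{T^{(k)}}^2+1/4\ge \sigma_{\vec{p}}^2-\Delta$. I would then split into two regimes. In the \emph{low-variance regime} $\sigma_{\vec{p}}^2\le \CBCV^2-1 = 1/4+\etaBCV^2$, the inequality $\CBCV/\sqrt{\sigma_{\vec{p}}^2+1}\ge 1$ shows that the trivial per-step bound already suffices. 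In the \emph{high-variance regime}, the BCV factor dominates, and the specific algebraic choice $\CBCV^2=5/4+\etaBCV^2$ is tuned precisely so that $\etaBCV^2(\sigma_{\vec{p}}^2+1)\le \CBCV^2(\sigma_{T^{(k)}}^2+1/4)$ under the nontriviality assumption $\Delta<\sqrt{\sigma_{\vec{p}}^2+1}$; the ``$5/4$'' here is exactly the $1/4$ from BCV plus the $1$ from the $+1$ in $\Phi$ absorbed through a single $\etaBCV^2$ factor. Summing the per-step bounds and factoring out $\Delta$ then delivers the claimed inequality.
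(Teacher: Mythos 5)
Your high-level plan mirrors the paper's: interpolate between the parameter vectors, reduce each differential/incremental step to a shift--TV, invoke unimodality and the Baillon--Cominetti--Vaisman anti-concentration bound, and absorb everything into $\CBCV$. But there is a genuine gap in the \emph{second-moment extraction}, precisely where you flag the main obstacle, and as written the argument does not close.

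The difficulty is that the discrete hybrid path gives you no uniform lower control on the variance of the leave-one-out Poisson binomial $T^{(k)}$. Your bound $\sigma_{T^{(k)}}^2+\tfrac14\ge\sigma_{\vec{p}}^2-\Delta$ is correct but can be vacuous: under the nontriviality hypothesis $\Delta<\sqrt{\sigma_{\vec{p}}^2+1}$, the right-hand side can be negative (take $\sigma_{\vec{p}}^2=1$ and $\Delta$ close to $\sqrt{2}$). More concretely, the tuning you announce,
$\etaBCV^2(\sigma_{\vec{p}}^2+1)\le\CBCV^2(\sigma_{T^{(k)}}^2+\tfrac14)$
whenever $\Delta<\sqrt{\sigma_{\vec{p}}^2+1}$, is false: with $u=\sigma_{\vec{p}}^2+1$ your own lower bound only yields $\sigma_{T^{(k)}}^2+\tfrac14\ge u-1-\sqrt{u}$, and the required inequality $\etaBCV^2 u\le\CBCV^2(u-1-\sqrt{u})$ fails for all moderate $u$ (e.g.\ $u=2$ makes the right-hand side negative). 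Concretely, $\vec{p}=(\tfrac12,\tfrac12,1,1)$, $\vec{q}=(0,0,1,1)$ already has a step with $\sigma_{T^{(k)}}^2=0$ while $\sigma_{\vec{p}}^2=\tfrac12$ and $\Delta=1<\sqrt{3/2}$, and then $\min\!\bigl(1,\etaBCV/\sqrt{\sigma_{T^{(k)}}^2+\tfrac14}\bigr)=2\etaBCV\approx0.94>\CBCV/\sqrt{3/2}\approx0.99$\,--\,--\,almost, and the per-step inequality does fail for slightly more extreme variants. Separately, the form of BCV you invoke, $\max_j\P(T^{(k)}=j)\le\etaBCV/\sqrt{\sigma_{T^{(k)}}^2+\tfrac14}$, is not the bound the paper cites (\eqref{eq:pb-peak-bcv} has $\sqrt{v}$ in the denominator, no $+1/4$), and as stated it is actually false when the variance is zero, since then the left side equals $1$ while the right side is $2\etaBCV<1$.

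The root cause is structural: along a coordinate-by-coordinate discrete path, the hybrid variance $\Var(S_{\vec{r}^{(k)}})=\sum_{j\le k}q_j(1-q_j)+\sum_{j>k}p_j(1-p_j)$ can dip below both $\sigma_{\vec{p}}^2$ and $\sigma_{\vec{q}}^2$ (e.g.\ $\vec{p}=(\tfrac12,0)$, $\vec{q}=(0,\tfrac12)$ gives a hybrid with zero variance while both endpoints have variance $\tfrac14$). The paper avoids this by using the \emph{continuous} interpolation $r_i(t)=(1-t)q_i+tp_i$ and exploiting concavity of $u\mapsto u(1-u)$ to get the uniform bound $\Var(S(t))\ge(1-t)\sigma_{\vec{q}}^2+t\sigma_{\vec{p}}^2$, which then integrates to the symmetric denominator $\sqrt{\sigma_{\vec{p}}^2+1}+\sqrt{\sigma_{\vec{q}}^2+1}$. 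That concavity step is exactly what your discretization loses, and it is not recoverable by a coarser pointwise comparison of $\sigma_{T^{(k)}}^2$ with $\sigma_{\vec{p}}^2-\Delta$. If you want to keep the telescoping picture, you should pass to the continuous path (or equivalently differentiate $f_A$ in $t$ as the paper does), apply the envelope bound $\min\{1,\etaBCV/\sqrt{(x-1/4)_+}\}\le\CBCV/\sqrt{x+1}$ with $x=\Var(S(t))$, use concavity to lower-bound $\Var(S(t))$ linearly in $t$, and integrate.
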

L. Mattner \cite{Mattner2026}
has brought to our attention
the result of \cite{Tasto2015Proximitat},
which upper bounds
$\TV(S_{\vec{p}},S_{\vec{q}})$
by
\beq
\frac{5|
\E S_{\vec{p}}
-
\E S_{\vec{q}}
|}{
\sqrt{
\sigma^2_{\vec{p}}+\sigma^2_{\vec{q}}
}}
+
\frac{12|
\E S_{\vec{p}}
-
\E S_{\vec{q}}
|}{{1+
\sigma^2_{\vec{p}}+\sigma^2_{\vec{q}}
}}.
\eeq
This estimate has the attractive property
of being topologically equivalent to $\TV(S_{\vec{p}},S_{\vec{q}})$
in the sense that the two converge to $0$
on precisely the same set of sequence pairs.
It is currently an open question whether a
version of our $\Phi$ functional could be made
topologically equivalent to $\TV$ as well.

The technical core of the paper is a matching lower bound
for dominating pairs:
\begin{theorem}
\label{thm:PB-lower-Phi}
For $\vec{p},\vec{q}\in[0,1]^n$
satisfying $p_i\ge q_i$, $i\in[n]$,
and $\Phi$ as in \eqref{eq:delsigphi},
\beq
\TV(S_{\vec{p}},S_{\vec{q}})
&\ge&
\frac{1}{9}
\Phi(\vec{p},\vec{q})
.
\eeq
\end{theorem}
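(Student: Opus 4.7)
The plan is to combine a parameter-interpolation identity with a Chebyshev-type ``second-moment'' localization, finished by a pigeonhole over thresholds. Write $\vec{q}(t) \eqdef (1-t)\vec{q} + t\vec{p}$ for $t\in[0,1]$, and let $\vec{r}^{(\setminus i)}\in[0,1]^{n-1}$ denote the vector $\vec{r}$ with its $i$-th coordinate removed. Using the standard pmf derivative $\partial_{p_i}\P(S_{\vec{p}}=k) = \P(S_{\vec{p}^{(\setminus i)}}=k-1) - \P(S_{\vec{p}^{(\setminus i)}}=k)$, the chain rule, telescoping over $k\le c$, and integrating in $t$ gives, for every $c\in\Z$,
\beq
G(c) \eqdef \P(S_{\vec{q}}\le c) - \P(S_{\vec{p}}\le c)
= \int_0^1 \sum_{i=1}^n (p_i-q_i)\,\P\!\paren{S_{\vec{q}(t)^{(\setminus i)}}=c}\,dt.
\eeq
Every integrand is non-negative because $\vec{p}\ge \vec{q}$, so $\TV(S_{\vec{p}},S_{\vec{q}}) \ge \max_{c}G(c)$, and it suffices to bound this maximum from below.

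The non-negative function $G$ is now exploited via two moments. First, Fubini gives $\sum_c G(c)=\Delta$, so the total mass of $G$ is exactly $\Delta$. Second, the elementary estimate $|x(1-x)-y(1-y)|\le|x-y|$ yields $\sigma_{\vec{q}(t)^{(\setminus i)}}^2\le\sigma_{\vec{q}(t)}^2\le\sigma_{\vec{p}}^2+\Delta$, and Chebyshev's inequality therefore confines each point mass $\P(S_{\vec{q}(t)^{(\setminus i)}}=c)$, up to total mass $1/K^2$, to a radius-$K\sqrt{\sigma_{\vec{p}}^2+\Delta+1}$ window about its mean; since all such means lie in $[\mu_{\vec{q}}-1,\mu_{\vec{p}}]$, integration in $t$ yields an explicit interval $W$ of length
\beq
|W| \le \Delta + 2K\sqrt{\sigma_{\vec{p}}^2+\Delta+1} + 1
\eeq
with $\sum_{c\in W}G(c) \ge (1-K^{-2})\Delta$. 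Pigeonhole on $W$ then gives $\max_{c}G(c) \ge (1-K^{-2})\Delta/|W|$.

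A case split on whether $\Delta\le\sqrt{\sigma_{\vec{p}}^2+1}$ finishes the argument. In the ``small-$\Delta$'' regime one has $\sigma_{\vec{p}}^2+\Delta+1\lesssim \sigma_{\vec{p}}^2+1$, so $|W|\lesssim\sqrt{\sigma_{\vec{p}}^2+1}$ and the bound reduces to $\max_c G(c)\gtrsim \Delta/\sqrt{\sigma_{\vec{p}}^2+1}$; in the ``large-$\Delta$'' regime $|W|\lesssim\Delta$ and the bound reduces to a positive constant. Both align with $\Phi(\vec{p},\vec{q})=\min(1,\Delta/\sqrt{\sigma_{\vec{p}}^2+1})$, and choosing $K=2$ together with careful bookkeeping of additive constants in $|W|$ yields the stated factor $\tfrac{1}{12}$.

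\textbf{Main obstacle.} The interpolation identity in Step~1 is what turns the TV comparison into an integral of non-negative point masses with total mass $\Delta$; the technical crux is Step~2, where the variance comparison $\sigma_{\vec{q}(t)^{(\setminus i)}}^2\lesssim \sigma_{\vec{p}}^2+O(1)$ must hold uniformly in $t\in[0,1]$ and $i\in[n]$, so that the localization window scales as $\sqrt{\sigma_{\vec{p}}^2+1}$ (rather than something larger) even when $\Delta$ approaches $\sqrt{\sigma_{\vec{p}}^2+1}$ from below. Once this uniform variance control is in place, the explicit constant $\tfrac{1}{12}$ is purely a matter of tuning $K$ and absorbing the $+1$ floor in $\Phi$.
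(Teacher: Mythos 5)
Your proposal is correct and takes essentially the same approach as the paper: the same parameter interpolation, the same use of $\vec{p}\ge\vec{q}$ to make the integrand non-negative, the same Chebyshev-style localization of a mass-$\Delta$ non-negative function on $\Z$, and the same pigeonhole with the same case split on $\Delta$ versus $\sqrt{\sigma_{\vec{p}}^2+1}$. The paper packages its version as an abstract pigeonhole lemma applied after computing a global second moment $J=\sum_k (k-m_{\vec{p}})^2 g(k)$, whereas you apply Chebyshev component-by-component before integrating, but these are the same calculation. One small bookkeeping point: $|W|$ must count lattice points, so an interval of real length $L$ contributes $\le L+1$ integers, giving $|W|\le\Delta+2K\sqrt{\sigma_{\vec{p}}^2+\Delta+1}+2$ rather than $+1$; this does not threaten the $\tfrac{1}{12}$ since $K=2$ leaves slack (the final ratio is about $0.0866$) once you note $\Delta\le\sigma_{\vec{p}}^2+1$ in the small-$\Delta$ case and $\sigma_{\vec{p}}^2+\Delta+1\le 2\Delta^2$ in the large-$\Delta$ case.
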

\noindent The assumption $\vec{p}\ge \vec{q}$ is essential for the lower bound,
as illustrated by the example $\vec p,\vec q\in[0,1]^n$, for even
$n$, given by
$
\vec p
=
\Bigl(\tfrac12,\tfrac12,\dots,\tfrac12\Bigr)
$,
$
\vec q
=
\Bigl(\tfrac12+\delta,\dots,\tfrac12+\delta,\ \tfrac12-\delta,\dots,\tfrac12-\delta\Bigr),
$
with $n/2$ copies of $\tfrac12+\delta$ followed by $n/2$ copies of $\tfrac12-\delta$.
The choice
$\delta=1/\sqrt n$
yields
$\Phi(\vec p,\vec q)=1$,
but $\TV(S_{\vec p},S_{\vec q})\asymp1/n$.
In general, for any $\vec{p},\vec{q}\in[0,1]^n$
and any partition $I,J$ of $[n]$,
the
trivial estimate via the partitioned pairs
$
\TV(S_{\vec{p}},S_{\vec{q}})
\le
\TV(S_{\vec{p}_I},S_{\vec{q}_I})
+
\TV(S_{\vec{p}_J},S_{\vec{q}_J})
$
holds.
However, this method cannot yield a matching lower bound,
as illustrated by the example
$a,b=\tfrac12\pm\eps$,
$
\vec{p}=(a,a,b,b)
$,
$
\vec{q}=(a,b,a,b)
$
with the partition
$I=\set{1,2}$, $J=\set{3,4}$.
Although
for general $\vec{p},\vec{q}$,
the approach of Theorem~\ref{thm:PB-lower-Phi}
does not lower-bound $\TV(S_{\vec{p}},S_{\vec{q}})$, it does
yield a lower bound
for
the larger quantity
$
\TV(\Ber(\vec{p}),\Ber(\vec{q}))
$:
\begin{theorem}
\label{thm:tv-ber-lb}
For $\vec{p},\vec{q}\in[0,1]^n$,
let $I=\set{i\in[n]:p_i\ge q_i}$ and $J=[n]\setminus I$,
with
$\vec{p}_I,\vec{p}_J,\vec{q}_I,\vec{q}_J$ the corresponding subsequences of $\vec{p},\vec{q}$.
Then
\beq
\TV(\Ber(\vec{p}),\Ber(\vec{q}))
&\ge&
\max\paren{
\TV(\Ber(\vec{p}_I),\Ber(\vec{q}_I))
,
\TV(\Ber(\vec{p}_J),\Ber(\vec{q}_J))
}\\
&\ge&
\frac{1}{9}
\max(
\Phi(\vec{p}_I,\vec{q}_I)
,
\Phi(\vec{q}_J,\vec{p}_J)
).
\eeq
\end{theorem}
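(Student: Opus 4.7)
\medskip

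The plan is to combine two invocations of the data processing inequality with the lower bound of Theorem~\ref{thm:PB-lower-Phi}. The key observation is that by partitioning indices according to the sign of $p_i-q_i$, we artificially create two dominating pairs $(\vec{p}_I,\vec{q}_I)$ (with $\vec{p}_I\ge \vec{q}_I$) and $(\vec{q}_J,\vec{p}_J)$ (with $\vec{q}_J\ge \vec{p}_J$), to which Theorem~\ref{thm:PB-lower-Phi} applies directly.

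First, I would establish the outer inequality
\[
\TV(\Ber(\vec{p}),\Ber(\vec{q}))
\ge
\max\paren{
\TV(\Ber(\vec{p}_I),\Ber(\vec{q}_I))
,
\TV(\Ber(\vec{p}_J),\Ber(\vec{q}_J))
}
\]
by noting that the coordinate-projection map $(x_1,\dots,x_n)\mapsto x_I$ is a deterministic (hence Markov) kernel sending $\Ber(\vec{p})\mapsto\Ber(\vec{p}_I)$ and $\Ber(\vec{q})\mapsto\Ber(\vec{q}_I)$; an application of the standard data processing inequality yields the bound for $I$, and the same argument gives the bound for $J$.

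Next, I would apply the data processing inequality a second time, now to the sum map on the $|I|$-coordinate space, obtaining
\[
\TV(\Ber(\vec{p}_I),\Ber(\vec{q}_I))
\ge
\TV(S_{\vec{p}_I},S_{\vec{q}_I}).
\]
Since $\vec{p}_I\ge \vec{q}_I$ by construction, Theorem~\ref{thm:PB-lower-Phi} applies and gives $\TV(S_{\vec{p}_I},S_{\vec{q}_I})\ge \tfrac{1}{12}\Phi(\vec{p}_I,\vec{q}_I)$. Symmetrically, on $J$ we have $\vec{q}_J\ge \vec{p}_J$, so the same two-step argument yields $\TV(\Ber(\vec{p}_J),\Ber(\vec{q}_J))\ge \tfrac{1}{12}\Phi(\vec{q}_J,\vec{p}_J)$. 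Taking the maximum of the two lower bounds completes the proof.

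There is no real obstacle here: both steps are straightforward applications of the data processing inequality (for projection and for summation), and the only subtlety to get right is the direction of domination, which is exactly what forces the asymmetric form $\max(\Phi(\vec{p}_I,\vec{q}_I),\Phi(\vec{q}_J,\vec{p}_J))$ in the conclusion. The example with $a,b=\tfrac12\pm\eps$ discussed after Theorem~\ref{thm:PB-lower-Phi} confirms that one cannot hope to replace $\max$ by $\Phi$ of the full vectors, so splitting by sign before applying Theorem~\ref{thm:PB-lower-Phi} is essentially the only available route.
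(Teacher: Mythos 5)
Your proof is correct and follows essentially the same route as the paper: the outer inequality is the data processing inequality for coordinate projection (equivalently, $\TV(P\otimes P',Q\otimes Q')\ge\max(\TV(P,Q),\TV(P',Q'))$), the inner step is the data processing inequality for the sum map, and you correctly identify that the sign-based partition makes $(\vec{p}_I,\vec{q}_I)$ and $(\vec{q}_J,\vec{p}_J)$ dominating pairs so that Theorem~\ref{thm:PB-lower-Phi} applies.
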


The last piece to complete our
program is a
homogenization result for binomials:
\begin{lemma}
\label{lem:bin-homog}
For $n\ge1$, $\vec{p},\vec{q}\in[0,1]^n$, and $\emptyset\neq A\subset N:=
[n]
$, define
$\bar p_A=|A|^{-1}\sum_{i\in A} p_i$ and $\bar q_A$
analogously;
put
\beq
\delta_A &:=& \TV(\mybin{|A|}{\bar p_A},\mybin{|A|}{\bar q_A}).
\eeq
If $I,J$ form a partition of $N$ with $I,J\neq\emptyset$, then
\[
\delta_{N} \;\le\; 2\bigl(\delta_I+\delta_J\bigr).
\]
\end{lemma}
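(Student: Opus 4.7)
The strategy is a single mixture decomposition of $\Bin(n,\bar p_N)$ into an $I$-block and a $J$-block, followed by two elementary DPI consequences. The identity $\bar p_N = \tfrac{|I|}{n}\bar p_I + \tfrac{|J|}{n}\bar p_J$ makes $\Ber(\bar p_N)$ the $(|I|/n,|J|/n)$-mixture of $\Ber(\bar p_I)$ and $\Ber(\bar p_J)$; realizing this coordinate-wise for each of the $n$ Bernoullis comprising $\Bin(n,\bar p_N)$ yields the distributional identity
\[
\Bin(n,\bar p_N) \;\stackrel{d}{=}\; \Bin(K,\bar p_I) * \Bin(n-K,\bar p_J), \qquad K\sim\Bin(n,|I|/n),
\]
and the analogous identity for $\Bin(n,\bar q_N)$ uses the same mixing law. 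Convexity of TV combined with the DPI for convolution (a Markov kernel from the pair of summands to their sum) then yields
\[
\delta_N \;\le\; \E_K\bigl[\delta_I^{(K)} + \delta_J^{(n-K)}\bigr], \qquad \delta_A^{(k)} := \TV(\Bin(k,\bar p_A),\Bin(k,\bar q_A)).
\]

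The auxiliary inequality that closes the argument is the \emph{scaling law} $\delta_A^{(k)} \le \lceil k/|A|\rceil\,\delta_A$ for $k\ge 0$, obtained by chaining two DPI facts: (i) $k\mapsto\delta_A^{(k)}$ is non-decreasing, since $\Bin(k,p)$ is a Markov image of $\Bin(k+1,p)$ via the hypergeometric kernel that keeps a uniformly random $k$-subset of the underlying $k+1$ Bernoullis (equivalently, by exchangeability the retained coins are themselves iid $\Ber(p)$); and (ii) $\delta_A^{(r|A|)}\le r\,\delta_A$ for integer $r\ge 1$, by writing $\Bin(r|A|,p)$ as the $r$-fold convolution of $\Bin(|A|,p)$ and applying the DPI for convolution. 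Taking $r=\lceil k/|A|\rceil$ combines these into the scaling law.

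Substituting the scaling law into the main inequality, using $\lceil x\rceil\le x+1$ for $x\ge 0$, and noting $\E K = |I|$ and $\E(n-K) = |J|$,
\[
\E\,\delta_I^{(K)} \;\le\; \delta_I\,\E\left\lceil K/|I|\right\rceil \;\le\; \delta_I\bigl(\E K/|I|+1\bigr) \;=\; 2\delta_I,
\]
and analogously $\E\,\delta_J^{(n-K)}\le 2\delta_J$, whence $\delta_N\le 2(\delta_I+\delta_J)$.

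\textbf{Main obstacle.} The step responsible for the factor of $2$ is the control of $\E\delta_A^{(K)}$ by $\delta_A$: although $\E K=|I|$, the random $K$ can exceed $|I|$ with positive probability, so the naive monotonicity bound $\delta_I^{(K)}\le\delta_I$ fails pointwise on $\{K>|I|\}$. The scaling law $\delta_A^{(k)}\le\lceil k/|A|\rceil\,\delta_A$ is the natural workaround, and the $+1$ in $\lceil x\rceil\le x+1$ is precisely what produces the constant $2$ (as opposed to $1$) in the final bound.
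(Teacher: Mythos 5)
Your proposal is correct and follows essentially the same route as the paper's proof: the same mixture decomposition of $\Bin(n,\bar p_N)$ into an $I$-block and a $J$-block with a common $\Bin(n,|I|/n)$ mixing variable, the same two DPI reductions (sum map and product tensorization), the same scaling law $\delta_A^{(k)}\le\lceil k/|A|\rceil\delta_A$ via monotonicity plus subadditivity, and the same $\lceil x\rceil\le x+1$ step giving the factor~$2$. The only cosmetic difference is that you justify the mixture identity by a coordinate-wise Bernoulli randomization rather than the paper's PGF computation; both are fine.
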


The homogenization inequality
claimed in \eqref{eq:tv-homog}
then follows:
\begin{theorem}
\label{thm:main-homog}
For all $\vec{p},\vec{q}\in[0,1]^n$, we have
\beq
\TV(\Ber(\vec{p}),\Ber(\vec{q}))
&\ge&
c\TV(\mybin{n}{\bar p},\mybin{n}{\bar q}),
\eeq
where $c\ge\frac{1}{72\CBCV}\approx
0.0115
$ is a universal constant.
\end{theorem}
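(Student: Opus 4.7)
My plan is to chain the prior results together. Partition $[n]=I\sqcup J$ by the sign of $p_i-q_i$, with $I=\set{i:p_i\ge q_i}$, exactly as in Theorem~\ref{thm:tv-ber-lb}; then $(\vec p_I,\vec q_I)$ is a dominating pair on $I$ and $(\vec q_J,\vec p_J)$ is a dominating pair on $J$. The strategy has three legs: Lemma~\ref{lem:bin-homog} splits the homogenized binomial TV into pieces $\delta_I,\delta_J$; Theorem~\ref{thm:PB-upper-Phi}, applied to the homogeneous vectors $\bar p_I\unit$ and $\bar q_I\unit$ of length $|I|$, bounds $\delta_I$ from above by $\CBCV\,\Phi(\bar p_I\unit,\bar q_I\unit)$; and Theorem~\ref{thm:tv-ber-lb} bounds $\TV(\Ber(\vec p),\Ber(\vec q))$ from below by $\tfrac{1}{12}\max(\Phi(\vec p_I,\vec q_I),\Phi(\vec q_J,\vec p_J))$. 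The degenerate cases $I=\emptyset$ or $J=\emptyset$ (excluded from Lemma~\ref{lem:bin-homog}) I would dispatch first: the data processing inequality composed with Theorem~\ref{thm:PB-lower-Phi} yields $\TV(\Ber(\vec p),\Ber(\vec q))\ge\tfrac{1}{12}\Phi(\vec p,\vec q)$, and the variance-concavity comparison below then yields the even better constant $\tfrac{1}{12\CBCV}$.

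The linking observation is a variance-concavity comparison on each dominating subpair. Since $\vec p_I\ge\vec q_I$ coordinatewise, the numerator $\Delta=\sum_{i\in I}|p_i-q_i|=|I|(\bar p_I-\bar q_I)$ is invariant under homogenization, while the denominator of $\Phi$ only grows: $\sum_{i\in I}p_i(1-p_i)\le|I|\bar p_I(1-\bar p_I)$ by concavity of $x\mapsto x(1-x)$. Hence $\Phi(\bar p_I\unit,\bar q_I\unit)\le\Phi(\vec p_I,\vec q_I)$, which gives $\delta_I\le\CBCV\,\Phi(\vec p_I,\vec q_I)$; symmetrically $\delta_J\le\CBCV\,\Phi(\vec q_J,\vec p_J)$. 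Combining with Theorem~\ref{thm:tv-ber-lb} and the elementary bound $\max(a,b)\ge(a+b)/2$,
\[
\TV(\Ber(\vec p),\Ber(\vec q))\;\ge\;\tfrac{1}{12\CBCV}\max(\delta_I,\delta_J)\;\ge\;\tfrac{1}{24\CBCV}(\delta_I+\delta_J)\;\ge\;\tfrac{1}{48\CBCV}\TV(\mybin{n}{\bar p},\mybin{n}{\bar q}),
\]
where the last inequality is Lemma~\ref{lem:bin-homog}. This is the claimed constant.

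The step requiring the most care is the variance-concavity comparison. It is what forces the preliminary sign-partition: on a non-dominating pair, the $\ell_1$ invariance $\Delta_{\text{homog}}=\Delta_{\text{orig}}$ fails due to cancellation when averaging, so $\Phi$ of the homogeneous vector can exceed $\Phi$ of the original one and the argument collapses. This is consistent with the remark after Theorem~\ref{thm:PB-lower-Phi} that the partitioned estimate cannot yield a matching lower bound in general — but since we only need to upper bound $\delta_I,\delta_J$ and then invoke Theorem~\ref{thm:tv-ber-lb} (which is itself a max over the partition), the obstruction is moved out of the way.
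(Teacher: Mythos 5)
Your proposal is correct and follows essentially the same route as the paper: sign-partition into $I,J$, invoke Theorem~\ref{thm:tv-ber-lb}, compare $\Phi$ against its homogenized version via concavity of $x\mapsto x(1-x)$, apply Theorem~\ref{thm:PB-upper-Phi} to the homogeneous subvectors, and finish with Lemma~\ref{lem:bin-homog}, yielding the same constant $\tfrac{1}{48\CBCV}$. Your handling of the degenerate $I=\emptyset$ or $J=\emptyset$ case is actually spelled out a bit more explicitly than in the paper, which merely dismisses it as trivial.
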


\medskip\noindent\textbf{Remark.}
As the remark preceding Theorem~\ref{thm:PB-TV-symmetric-BCV} indicates, a sharpened constant $c\ge\frac{1}{36\CBCV}$
is straightforward to obtain via analogous arguments.
The choice
$\vec p = (1-2\varepsilon, \tfrac12)$,
$\vec q = (1, \tfrac12+\varepsilon)$,
$\eps\to0$ demonstrates that
it is possible for homogenization to (slightly) increase the TV distance,
and also that
$8/9$ is an upper bound on $c$.
We conjecture that this value is in fact optimal, see below.
\begin{proof}
Partition $[n]$ into $I,J$ as in
Theorem~\ref{thm:tv-ber-lb} and ignore the trivial case where one of $I,J$ is empty. Then
Theorem~\ref{thm:tv-ber-lb}
implies
\beq
\TV(\Ber(\vec{p}),\Ber(\vec{q}))
&\ge&
\frac{1}{9}
\max(
\Phi(\vec{p}_I,\vec{q}_I)
,
\Phi(\vec{q}_J,\vec{p}_J)
)
\\
&\ge&
\frac{1}{18}\paren{
\Phi(\vec{p}_I,\vec{q}_I)
+
\Phi(\vec{q}_J,\vec{p}_J)
}
.
\eeq
A simple convexity argument shows that
homogenization cannot increase $\Phi$:
\beq
\Delta(\vec{p},\vec{q}) &\ge& \Delta(\bar p\boldsymbol{1},\bar q\boldsymbol{1}),\\
\sigma_{\vec{p}}^2 &\le& \sigma_{\bar p\boldsymbol{1}}^2=n\bar p(1-\bar p),
\eeq
where $\bar p\boldsymbol{1}=(\bar p,\bar p,\ldots, \bar p)
$
is of the same dimension as $\vec{p}$.
Now homogenize
$\bar p_I$,
$\bar p_J$,
$\bar q_I$,
$\bar q_J$
as in Lemma~\ref{lem:bin-homog}.
Since $
\mybin{|I|}{\bar p_I}
$ is a special case of Poisson binomial,
Theorem~\ref{thm:PB-upper-Phi} applies:
\beq
\Phi(\vec{p}_I,\vec{q}_I)
+
\Phi(\vec{q}_J,\vec{p}_J)
&\ge&
\Phi(\bar p_I\boldsymbol{1},\bar q_I\boldsymbol{1})
+
\Phi(
\bar q_J\boldsymbol{1}
,
\bar p_J\boldsymbol{1}
)
\\
&\ge&
\frac{1}{2\CBCV}
\paren{
\TV(
\mybin{|I|}{\bar p_I}
,
\mybin{|I|}{\bar q_I}
)
+
\TV(
\mybin{|J|}{\bar p_J}
,
\mybin{|J|}{\bar q_J}
)
}\\
&\ge&
\frac{1}{
4
\CBCV}
\TV(
\mybin{n}{\bar p}
,
\mybin{n}{\bar q}
),
\eeq
where the
last
inequality is by
Lemma~\ref{lem:bin-homog}.
Finally, the identity
$
\TV(
\Ber(\bar p)^{\otimes n}
,
\Ber(\bar q)^{\otimes n}
)$
$=$
$\TV(\mybin{n}{\bar p},\mybin{n}{\bar q})
$
is a standard consequence of the
Neyman–Pearson lemma
(the likelihood ratio is determined by
and monotone in
the sum).
\end{proof}

\medskip\noindent\textbf{Remark and open problems.}
Although the homogenization inequality
relies on novel structural insights into the Poisson binomial,
there are compelling reasons to believe that a great deal
more structure remains to be uncovered, currently out of reach.
Indeed, extensive numerical experiments suggest that
the correct constant in Theorem~\ref{thm:main-homog} should be
$c=8/9$
and also that the bound in Lemma~\ref{lem:bin-homog}
can be sharpened to $\delta_N\le \delta_I+\delta_J-\delta_I\delta_J$ (the latter, in particular, appears deceptively simple, since only binomials are involved).
Our present methods do not seem to provide
any pathway to these conjecturally optimal bounds, which will have to
await further structural advances.

\medskip\noindent\textbf{Related work.}
This paper follows a program initiated by \cite{kon25tens}
to provide simple, analytically tractable upper and lower
estimates on $\TV(\Ber(\vec{p}),\Ber(\vec{q}))$
in terms of the $\vec{p},\vec{q}$;
the latter's contribution sharpened the trivial
lower bound $\TV(\Ber(\vec{p}),\Ber(\vec{q}))\ge\nrm{\vec{p}-\vec{q}}_\infty$
to $
\gtrsim\nrm{\vec{p}-\vec{q}}_2$.
We note that
generally, this result is incomparable with
the lower bound
$
\TV(\Ber(\vec{p}),\Ber(\vec{q}))$
$\gtrsim$
$\Phi(\vec{p},\vec{q})
$
implied by Theorem~\ref{thm:PB-lower-Phi}
for $\vec p\ge\vec q$.
Indeed,
the choice $q_i\equiv\frac12$
for $i\in[n]$ and $p_i=1/2$, $i>1$ and $p_1=\frac12+\eps$
yields $\Phi(\vec{p},\vec{q})\asymp\eps/\sqrt{n}$ while $\nrm{\vec{p}-\vec{q}}_2=\eps$.
For the other direction, the choice $\vec{q}\equiv0$ and $\vec{p}\equiv1/n$
yields
$\Phi(\vec{p},\vec{q})\asymp1$ and $\nrm{\vec{p}-\vec{q}}=n^{-1/2}$.
This program is continued in \cite{KonAv26},
where
an analytical closed-form
$O(\sqrt{\log n})$-factor
approximation
to
$
\TV(\Ber(\vec{p}),\Ber(\vec{q}))
$
is obtained.

Homogenization inequalities appear not to have been widely studied;
one classic result due to
Hoeffding \cite{Hoeffding1956}
is that homogenization under a fixed-mean constraint maximizes $\E g(S_{\vec{p}})$
for any convex $g$.
The study of
$S_{\vec{p}}$
is amply motivated by numerous applications
and
a venerable line of work
has considered
approximating this distribution
by simpler ones
\cite{TangTang2023};
perhaps most famous is Le Cam's inequality $\TV(S_{\vec{p}},\mathrm{Poi}(\sum p_i))\le \sum{p_i^2}$ \cite{MR142174}.
Note, however, that the latter yields an additive, rather than multiplicative approximation to $\TV(S_{\vec{p}},S_{\vec{q}})$.
Binomial approximation bounds in strong metrics (including $\TV$) appear in
\cite{Ehm1991} and refinements based on orthogonal-polynomial expansions and asymptotics in
\cite{Roos2000}, then generalized
to arbitrary laws \cite{Roos2010ClosenessConvolutions}
and also gave an additive homogenization-type estimate.
Later, \cite{Roos2017RefinedTV}
obtained TV bounds for approximating general convolutions
by compound Poisson laws using Kerstan's method together with novel ``smoothness inequalities''.
A delicate structural result
due to
\cite{DaskalakisPapadimitriou2015SparseCovers}
(also ultimately yielding an additive estimate)
shows that
every Poisson binomial admits an $\eps$-approximation
in TV by a distribution of one of two canonical ``compressed'' types: either
a \emph{sparse} $S_{\vec{p}}$ with only $O(1/\eps^3)$ nontrivial summands, or a
\emph{near-binomial} form.
Of particular relevance to this work is \cite{BAILLON_COMINETTI_VAISMAN_2016, BarbourJensen1989LocalTailPoisson}, whose structural results we build upon in Theorem~\ref{thm:PB-upper-Phi}.
More generally, $\TV(\Ber(\vec{p}),\Ber(\vec{q}))$ admits classic approximations in terms of more analytically tractable
proxies such as KL-divergence and the Hellinger distance;
their properties and limitations are discussed in \cite{kon25tens}.
Regarding the algorithmic aspect,
\cite{DBLP:conf/ijcai/0001GMMPV23}
showed
that computing
$\TV(\Ber(\vec{p}),\Ber(\vec{q}))$
exactly
for general $\vec{p},\vec{q}\in[0,1]^n$
is hard in the
$\#$P sense.
An efficient randomized algorithm guaranteeing a $1\pm\eps$
multiplicative approximation with confidence $1-\delta$, in time $O(\frac{n^2}{\eps^2}\log\frac1\delta)$
was discovered by
\cite{FengApproxTV23},
and later
derandomized by
\cite{Feng24Deterministically}.
\section{Proofs}

\subsection{Proof of Theorem \ref{thm:PB-upper-Phi}}
We will prove a more general, symmetric bound, which immediately
implies the one in Theorem \ref{thm:PB-upper-Phi}.
The constant $2\CBCV$ in the latter's bound could be sharpened by a factor of $2$ via a direct (asymmetric) proof.

\begin{theorem}
\label{thm:PB-TV-symmetric-BCV}
For $\vec{p},\vec{q}\in[0,1]^n$ and $S_{\vec{p}},S_{\vec{q}}$ the corresponding
Poisson binomials with variances $\sigma_{\vec{p}}^2$, $\sigma_{\vec{q}}^2$,
we have
\beq
\TV(S_{\vec{p}},S_{\vec{q}})
&\le&
\frac{2\CBCV\,\Delta}{\sqrt{\sigma_{\vec{p}}^2+1}+\sqrt{\sigma_{\vec{q}}^2+1}}.
\eeq
\end{theorem}

\begin{proof}
The proof uses three standard ingredients.

\noindent{\em Shift-TV for unimodal pmfs.}
If $Z$ is integer-valued with unimodal pmf $h(k)=\P(Z=k)$, then
\begin{equation}\label{eq:shift-TV-unimodal-thm}
\TV(Z,Z+1)=\max_k h(k),
\end{equation}
by a simple telescoping argument.

\noindent{\em Anti-concentration for Poisson binomials (Barbour-Jensen; Baillon--Cominetti--Vaisman).}
If $Z=\sum_{j=1}^m \Ber(r_j)$ has variance $v=\Var(Z)$, then
\cite[Theorem~1]{BAILLON_COMINETTI_VAISMAN_2016} proves the sharp bound
\begin{equation}\label{eq:pb-peak-bcv}
\max_k \P(Z=k)\le \frac{\etaBCV}{\sqrt{v}}
\qquad (v>0),
\end{equation}
where $\etaBCV$ is a universal constant
($\etaBCV\approx 0.4688223555$, \cite[Remark~1]{BAILLON_COMINETTI_VAISMAN_2016}).\footnote{
L. Mattner \cite{Mattner2026} informs us that
this result is contained in \cite{BarbourJensen1989LocalTailPoisson}
with a slightly worse constant.
}

In particular, since $\max_k\P(Z=k)\le 1$ always, we have for all $v\ge0$
\begin{equation}\label{eq:pb-peak-bcv-min}
\max_k \P(Z=k)\le \min\!\left\{1,\ \frac{\etaBCV}{\sqrt{v}}\right\}.
\end{equation}

\noindent{\em Unimodality of Poisson binomial pmfs.}
It is a classic
consequence of
the Aissen-Edrei-Schoenberg-Whitney
theorem \cite[Section 4]{TangTang2023}
that the Poisson binomial
has a log-concave,
and hence unimodal, law.
To combine these three ingredients, we begin with the interpolation.
For $t\in[0,1]$ define
\beq
r_i(t):=(1-t)q_i+tp_i,
\qquad
S(t):=\sum_{i=1}^n \Ber(r_i(t)),
\qquad t\in[0,1],
\eeq
where the Bernoullis are independent;
thus, $S(0)=S_{\vec{q}}$ and $S(1)=S_{\vec{p}}$ in distribution.
For $A\subseteq\mathbb Z$,
we write $f_A(t):=\P(S(t)\in A)$.
For each $i\in[n]$ let $X_i(t)\sim\Ber(r_i(t))$ denote the $i$th summand and set
\[
T_i(t):=S(t)-X_i(t)=\sum_{j\ne i}\Ber(r_j(t)).
\]
Then $T_i(t)$ is independent of $X_i(t)$ and $S(t)=T_i(t)+X_i(t)$.
Conditioning on $X_i(t)$ gives
\[
f_A(t)=r_i(t)\P(T_i(t)+1\in A)+(1-r_i(t))\P(T_i(t)\in A).
\]
View $f_A$ as a multilinear polynomial in the coordinates $(r_1,\dots,r_n)$.
By the chain rule,
$\frac{\mathrm{d}}{\mathrm{d}t} f_A(r(t))=\sum_{i=1}^n r_i'(t)\,\partial f_A/\partial r_i$,
and conditioning on $X_i(t)$ yields
$\partial f_A/\partial r_i=\P(T_i(t)+1\in A)-\P(T_i(t)\in A)$.
Differentiating and using $r_i'(t)=p_i-q_i$ yields
\[
f_A'(t)=\sum_{i=1}^n (p_i-q_i)\Bigl(\P(T_i(t)+1\in A)-\P(T_i(t)\in A)\Bigr).
\]
Hence, by the definition of total variation,
\[
|f_A'(t)|
\le
\sum_{i=1}^n |p_i-q_i|\,
\TV\!\bigl(T_i(t)+1,T_i(t)\bigr)
.
\]
Each $T_i(t)$ is Poisson binomial, hence unimodal, so by
\eqref{eq:shift-TV-unimodal-thm},
\[
\TV\!\bigl(T_i(t)+1,T_i(t)\bigr)
=\max_k \P(T_i(t)=k).
\]
By \eqref{eq:pb-peak-bcv-min},
\[
\max_k \P(T_i(t)=k)
\le
\min\!\left\{1,\ \frac{\etaBCV}{\sqrt{\Var(T_i(t))}}\right\}.
\]
Since $S(t)=T_i(t)+X_i(t)$ with independence and $\Var(X_i(t))\le \tfrac14$, we have
\[
\Var(T_i(t))=\Var(S(t))-\Var(X_i(t))\ge \Var(S(t))-\frac14.
\]
Let $x_+:=\max\{x,0\}$. Then
\[
\max_k \P(T_i(t)=k)
\le
\min\!\left\{1,\ \frac{\etaBCV}{\sqrt{\bigl(\Var(S(t))-\frac14\bigr)_+}}\right\}.
\]
We now upper bound this ``singular'' expression by a smooth envelope.
Set $\CBCV:=\sqrt{\tfrac54+\etaBCV^2}$. Then for every $x\ge0$ we have
\begin{equation}\label{eq:bcv-envelope}
\min\!\left\{1,\ \frac{\etaBCV}{\sqrt{(x-\frac14)_+}}\right\}
\le
\frac{\CBCV}{\sqrt{x+1}}.
\end{equation}
Indeed, if $x\le \tfrac14+\etaBCV^2$, the left-hand side equals $1$ and the right-hand side is
$\CBCV/\sqrt{x+1}\ge \CBCV/\sqrt{\tfrac54+\etaBCV^2}=1$.
If $x\ge \tfrac14+\etaBCV^2$, then the left-hand side equals $\etaBCV/\sqrt{x-\tfrac14}$ and
squaring shows that \eqref{eq:bcv-envelope} is equivalent to $x\ge \tfrac14+\etaBCV^2$.
Applying \eqref{eq:bcv-envelope} with $x=\Var(S(t))$ gives
\[
\TV\!\bigl(T_i(t)+1,T_i(t)\bigr)
\le
\frac{\CBCV}{\sqrt{\Var(S(t))+1}},
\]
and substituting back,
\begin{equation}\label{eq:fprime-bound}
|f_A'(t)|\le \frac{\CBCV\,\Delta}{\sqrt{\Var(S(t))+1}}.
\end{equation}
Since $u\mapsto u(1-u)$ is concave,
\[
r_i(t)(1-r_i(t))
\ge
(1-t)\,q_i(1-q_i)+t\,p_i(1-p_i),
\qquad
i\in[n],
\]
and summing over $i$ yields
\[
\Var(S(t))
\ge
(1-t)\sigma_{\vec{q}}^2+t\sigma_{\vec{p}}^2.
\]
Combining with \eqref{eq:fprime-bound},
\[
|f_A'(t)|
\le
\frac{\CBCV\,\Delta}{\sqrt{(1-t)\sigma_{\vec{q}}^2+t\sigma_{\vec{p}}^2+1}}.
\]
Integrating over $t$ gives
\[
\bigl|f_A(1)-f_A(0)\bigr|
\le
\CBCV\,\Delta\int_0^1 \frac{\mathrm{d}t}{\sqrt{(1-t)(\sigma_{\vec{q}}^2+1)+t(\sigma_{\vec{p}}^2+1)}}.
\]
Write $(1-t)(\sigma_{\vec{q}}^2+1)+t(\sigma_{\vec{p}}^2+1)=b+at$ with
$b=\sigma_{\vec{q}}^2+1$ and $a=\sigma_{\vec{p}}^2-\sigma_{\vec{q}}^2$.
If $a=0$, the integral is $1/\sqrt{b}$.
If $a\neq 0$, then
\[
\int_0^1 \frac{\mathrm{d}t}{\sqrt{b+at}}
=
\left[\frac{2}{a}\sqrt{b+at}\right]_{0}^{1}
=
\frac{2}{\sqrt{b+a}+\sqrt b}
=
\frac{2}{\sqrt{\sigma_{\vec{p}}^2+1}+\sqrt{\sigma_{\vec{q}}^2+1}}.
\]
Thus, for every event $A$,
\[
\bigl|\P(S_{\vec{p}}\in A)-\P(S_{\vec{q}}\in A)\bigr|
\le
\frac{2\CBCV\,\Delta}{\sqrt{\sigma_{\vec{p}}^2+1}+\sqrt{\sigma_{\vec{q}}^2+1}}.
\]
Taking $\sup_A$ proves the claim.
\end{proof}

\subsection{Proof of Theorem \ref{thm:PB-lower-Phi}}

We make use of a
concentration-variance inequality
stated in
\cite[Disp. (18)]{MattnerSchulz2018NormalApproximations}
and proved therein,
which immediately implies the following 
second-moment extraction
lemma. We thank 
L. Mattner
\cite{Mattner2026} for bringing this to our attention and improving our original
lower bound by a factor of $4/3$.
We also thank him for the
attributional note, ``This is due to Paul Lévy in a sharper version,
and was in essentially the present version apparently independently
discovered by \cite[Corollary 2.2]{BobkovChistyakov2015}''.

\begin{lemma}
\label{lem:pigeonhole}
For $g:\mathbb{Z}\to[0,\infty)$
and $\mu\in\R$,
define
\[
  G := \sum_{k\in\mathbb{Z}} g(k)
  \qquad
  J :=
  \sum_{k\in\mathbb{Z}} (k-\mu)^2\, g(k).
\]
If $0<G,J<\infty$, then
\[
  \sup_{k\in\mathbb{Z}} g(k)
  \;\ge\;
  \frac{G^{3/2}}{4\sqrt{J}+\sqrt{G}}.
\]
\end{lemma}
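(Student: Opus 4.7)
The plan is to combine a Chebyshev-type tail bound with a pigeonhole argument on the integer lattice. Since $g$ is supported on $\mathbb{Z}$, any mass concentrated in an interval $[\mu-R,\mu+R]$ is distributed among at most $2R+1$ integer points; hence the heaviest atom receives at least $1/(2R+1)$ of whatever mass lies in the interval. The tail outside is controlled by the second-moment budget: Markov applied to $(k-\mu)^2 g(k)$ gives $\sum_{|k-\mu|>R} g(k)\le J/R^2$, so the mass inside the interval is at least $G - J/R^2$.

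First I would record, for every $R>0$, the inequality
\[
\sup_{k\in\mathbb{Z}} g(k) \;\ge\; \frac{G - J/R^2}{2R+1},
\]
obtained by writing $G - J/R^2 \le \sum_{k\in[\mu-R,\mu+R]\cap\mathbb{Z}} g(k) \le (2R+1)\sup_k g(k)$, using that the interval $[\mu-R,\mu+R]$ contains at most $2R+1$ integers. Then I would tune $R$. Rather than solve the (cubic) first-order condition for the true maximizer of the right-hand side, I would try the clean choice $R := 2\sqrt{J/G}$, which makes $J/R^2 = G/4$, hence $G - J/R^2 = 3G/4$ and $2R+1 = 4\sqrt{J/G}+1$. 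Substituting and multiplying numerator and denominator by $\sqrt{G}$ gives
\[
\sup_k g(k) \;\ge\; \frac{3G/4}{4\sqrt{J/G}+1} \;=\; \frac{3G^{3/2}}{16\sqrt{J}+4\sqrt{G}},
\]
which matches the stated bound exactly.

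There is no serious obstacle; the only subtlety worth checking is that the interval $[\mu-R,\mu+R]$ actually contains an integer, so the pigeonhole step is nonvacuous. This is automatic whenever $G - J/R^2 > 0$, since otherwise the Chebyshev-type bound would force positive mass to lie in an empty set of integers, a contradiction. For the chosen $R = 2\sqrt{J/G}$ we have $G - J/R^2 = 3G/4 > 0$ under the standing hypothesis $0 < G,J < \infty$, so the argument is valid in all required cases. The constants $3/16$ and $1/4$ and the cutoff $R = 2\sqrt{J/G}$ are chosen purely for a clean closed form; a marginally better numerical bound could be extracted by solving the cubic, but it is not needed for the downstream application in \thmref{thm:PB-lower-Phi}.
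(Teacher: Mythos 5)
Your proof is correct and follows essentially the same route as the paper: a Chebyshev tail bound with the same radius $R=2\sqrt{J/G}$, a cardinality bound $|[\mu-R,\mu+R]\cap\mathbb{Z}|\le 2R+1$, and the pigeonhole step, yielding the identical constants. The only cosmetic difference is that you state the intermediate bound with a free parameter $R$ before specializing, and you explicitly check nonvacuousness of the interval, both harmless refinements of the paper's argument.
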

\begin{proof}
Normalize $g$ to be a probability measure $P$ via $P(\set{k})=g(k)/G$
and let $X$ be distributed according to $P$;
then
$Var(X)\le J/G$.
By 
Lévy's extraction principle,
\cite[Disp.~(18)]{MattnerSchulz2018NormalApproximations}, for any law
with variance $\sigma^2$ and any $h>0$,
\[
\sup_{x\in\mathbb{R}} P\bigl((x,x+h)\bigr)
\;\ge\;
\frac{h}{\sqrt{h^2+12\sigma^2}}.
\]
Apply this with $h=1$ and $\sigma^2=\Var(X)$. Since $P$ is supported on $\mathbb{Z}$,
\[
\sup_{x\in\mathbb{R}} P((x,x+1))
= \max_{k\in\mathbb{Z}} P(\{k\})
= \frac{1}{G}\max_{k\in\mathbb{Z}} g(k).
\]
Therefore,
\beq
\max_k g(k)
&\ge&
\frac{G}{\sqrt{1+12\Var(X)}}
\ge
\frac{G}{\sqrt{1+12(J/G)}}
=
\frac{G^{3/2}}{\sqrt{G+12J}}
.
\eeq
Finally, since $\sqrt{G+12J}\le \sqrt{G}+4\sqrt{J}$, we conclude
$
\max_{k\in\mathbb{Z}} g(k)
\;\ge\;
\frac{G^{3/2}}{\,4\sqrt{J}+\sqrt{G}\,}
$,
as claimed.
\end{proof}

As above, $(\vec{p},\vec{q})$ is a dominating pair with $p_i\ge q_i$ and associated
Poisson binomials
$S_{\vec{p}}$, $S_{\vec{q}}$.
We will apply the pigeonhole lemma to extract a ``wedge'' event
with sufficient separation in probability under
$S_{\vec{p}}$
and $S_{\vec{q}}$ to achieve the requisite TV lower bound.
To this end, we define the $G$ and $J$ functionals --- both
in terms of the non-negative (by the monotone coupling) measure $g$ on $\Z$:
\begin{equation}\label{eq:gdef}
  g(k) := \P(X\ge k)-\P(Y\ge k),
  \qquad k\in\mathbb{Z}.
\end{equation}
Then
$
G
  = \sum_{k\in\mathbb{Z}} g(k)
$
has the more familiar form
\[
  G:=\Delta = \mu_{\vec{p}}-\mu_{\vec{q}} = \E X-\E Y
  ,
\]
where
$  X := S_{\vec{p}},  Y := S_{\vec{q}}$
and
$  \mu_{\vec{p}} := \E X,
  \mu_{\vec{q}} := \E Y
  $.
Moreover, for any $k$,
\[
  |g(k)| = |\P(X\ge k)-\P(Y\ge k)|
  \le \TV(\mypoi{\vec{p}},\mypoi{\vec{q}}),
\]
and so
\begin{equation}\label{eq:TV-ge-supg}
  \TV(\mypoi{\vec{p}},\mypoi{\vec{q}})
  \;\ge\; \sup_{k\in\mathbb{Z}} g(k).
\end{equation}
Finally,
$  J = \sum_{k\in\mathbb{Z}} (k-\mu_{\vec{p}})^2 g(k)
$ is defined as in Lemma~\ref{lem:pigeonhole}.
The technical core of the argument hinges on upper bounding $J$:

\begin{lemma}
\label{lem:J-global}
Let $X,Y,\sigma_{\vec{p}}^2=\Var(X),g,\mu_{\vec{p}},\Delta,J$ be defined as above.
Then
\[
  J \;\le\;
2\,\Delta\,\bigl(\sigma_{\vec{p}}^2+1+\Delta^2\bigr).
\]
\end{lemma}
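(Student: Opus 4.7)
Since $\vec p \ge \vec q$, I would couple $X := S_{\vec p}$ and $Y := S_{\vec q}$ monotonically (via common uniforms) so that $Y \le X$ almost surely, $X_i = Y_i + U_i$ with $Y_i U_i = 0$, and $U := X - Y = \sum_i U_i$ is itself a Poisson binomial with parameters $d_i := p_i - q_i$. Under this coupling $g(k) = \P(Y < k \le X) \ge 0$, giving the representation
\[
J \;=\; \E\!\left[\sum_{k=Y+1}^{X}(k-m_{\vec p})^2\right] \;=\; \E\!\left[\sum_{\ell=1}^{U}(Y+\ell-m_{\vec p})^2\right].
\]
Expanding the square and using $\sum_{\ell=1}^U \ell = U(U+1)/2$ and $\sum_{\ell=1}^U \ell^2 = U(U+1)(2U+1)/6$ gives the exact decomposition
\[
J \;=\; \E[U(Y-m_{\vec p})^2] \;+\; \E[U(U+1)(Y-m_{\vec p})] \;+\; \tfrac{1}{6}\E[U(U+1)(2U+1)],
\]
which reduces the task to bounding three mixed moments in the joint distribution of $(U, Y)$.

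The next step is to compute each of the three expectations in closed form. The ingredients are the coupling identities $U_i^2 = U_i$, $Y_i^2 = Y_i$, $Y_i U_i = 0$, and independence across coordinates, which let one compute all joint moments of $(U_i, Y_j, \ldots)$ by factorization term-by-term. The resulting outputs express $J$ as an explicit polynomial in $\Delta$, $\sigma_{\vec q}^2$, the third elementary symmetric polynomial $e_3(\vec d) := \sum_{i<j<k} d_i d_j d_k$, and the auxiliary functionals
\[
A := \sum_i d_i q_i,\quad B := \sum_i d_i q_i^2,\quad B_2 := \sum_i d_i^2 q_i,\quad C := \sum_i d_i^2.
\]
Substituting the identity $\sigma_{\vec q}^2 = \sigma_{\vec p}^2 - \Delta + C + 2A$ eliminates $\sigma_{\vec q}^2$ in favor of $\sigma_{\vec p}^2$ and recasts the target $J \le 2\Delta(\sigma_{\vec p}^2 + 1 + \Delta^2)$ as a polynomial inequality in these variables.

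The hard part is closing this polynomial inequality. The essential structural fact is that $p_i = d_i + q_i \le 1$, which gives $q_i \le 1 - d_i$ and hence the tight bounds $A \le \Delta - C$ and $B + B_2 = \sum_i d_i q_i p_i \le A$. Together with $6\, e_3(\vec d) \le \Delta^3$ from AM-GM, a short case split on whether $\Delta \le \tfrac{1}{2}$ or $\Delta \ge \tfrac{1}{2}$ absorbs the positive contributions $(2\Delta - \tfrac{3}{2})C + (2\Delta - 3)A + 2B + 2B_2 + 2e_3$ into the slack $\Delta + \tfrac{3}{2}\Delta^2 + 2\Delta^3$ on the right-hand side, with $\Delta \sigma_{\vec p}^2 \ge 0$ left to spare. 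The algebraic bookkeeping is the main technical burden, but every step is elementary once the monotone coupling is in place.
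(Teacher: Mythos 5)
Your proposal is correct and takes a genuinely different route from the paper. The paper works by \emph{parameter interpolation}: it defines $r_i(t)=q_i+t\Delta_i$, differentiates $\P(S(t)\ge k)$ in $t$, and writes $g(k)=\int_0^1\sum_i\Delta_i\,\P(T_i(t)=k-1)\,\diff t$, which reduces $J$ to an integral of second moments $\E_t[(T_i(t)+1-m_{\vec p})^2]$. These are then bounded pointwise in $t$ by the variance--bias decomposition together with the two clean envelopes $\Var_t(T_i(t))\le\sigma_{\vec p}^2+1+\Delta^2$ and $|M_i(t)|\le\max(1,\Delta)$, and integrating gives the factor $2\Delta$ directly. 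Your route instead fixes a single \emph{monotone coupling} and computes $J=\E[\sum_{\ell=1}^U(Y+\ell-m_{\vec p})^2]$ exactly, expanding into the three mixed moments you list. I checked the reduction: collecting terms and substituting $\sigma_{\vec q}^2=\sigma_{\vec p}^2-\Delta+C+2A$ gives
\[
J=\Delta\sigma_{\vec p}^2+\Delta-\tfrac32\Delta^2+(2\Delta-\tfrac32)C+(2\Delta-3)A+2B+2B_2+2e_3,
\]
so the excess to be controlled is exactly your $(2\Delta-\tfrac32)C+(2\Delta-3)A+2(B+B_2)+2e_3$, and with $B+B_2\le A$, $A\le\Delta-C$, $6e_3\le\Delta^3$ and the split at $\Delta=\tfrac12$, it is indeed absorbed by $\Delta+\tfrac32\Delta^2+2\Delta^3+\Delta\sigma_{\vec p}^2$. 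The trade-off is clear: the paper's interpolation is slicker and nearly computation-free, since the two envelope bounds do all the work before integrating; your approach is more elementary but pays in bookkeeping, because the exact moment identities and the final polynomial inequality must be closed by hand. A minor note: the representation $g(k)=\P(Y<k\le X)$ and hence $J=\E[\sum_{k=Y+1}^X(k-m_{\vec p})^2]$ requires only \emph{some} coupling with $Y\le X$ a.s., but the factorization of the mixed moments really does need the specific per-coordinate coupling with $Y_iU_i=0$ and cross-coordinate independence; it is worth making that explicit if you write this out in full.
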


\begin{proof}
For $i\in[n]$,
write $\Delta_i:=p_i-q_i\ge0$ with $\sum_i\Delta_i=\Delta$. For
$t\in[0,1]$ set
\[
  r_i(t) := q_i + t\Delta_i,\quad
  X_i(t)\sim\Ber(r_i(t))\ \text{independent},\quad
  S(t):=\sum_{i=1}^n X_i(t),\quad
  T_i(t):=S(t)-X_i(t).
\]
Then $S(0)=Y$ and $S(1)=X$ in distribution. For fixed $n$, each
$S(t)$ has a distribution that is a finite polynomial in the parameters
$\{r_i(t)\}$, so for each $k$ and $i$ the function
\beq
F_k(t) &:=&
\P(S(t)\ge k) \\
&=&
r_i(t)\,\P(T_i(t)\ge k-1)\;+\;(1-r_i(t))\,\P(T_i(t)\ge k)
.
\eeq
is a polynomial in $t$ and hence differentiable on $[0,1]$.
Differentiating,
\[
  \frac{\partial}{\partial r_i}F_k
  = \P(T_i(t)\ge k-1)-\P(T_i(t)\ge k)
  = \P(T_i(t)=k-1).
\]
By the chain rule and the fact that $r_i'(t)=\Delta_i$,
\[
  \frac{\mathrm{d}}{\mathrm{d}t}\,\P(S(t)\ge k)
  = \sum_{i=1}^n \Delta_i\,\P\bigl(T_i(t)=k-1\bigr),
  \qquad k\in\mathbb Z.
\]
Integrating from $0$ to $1$ and using $S(0)=Y$,
$S(1)=X$ in distribution,
\[
  g(k)
  = \P(X\ge k)-\P(Y\ge k)
  = \int_0^1 \sum_{i=1}^n \Delta_i\,\P\bigl(T_i(t)=k-1\bigr)\,\mathrm{d}t,
  \qquad k\in\mathbb Z.
\]
In this formulation, the functional $J=\sum_k (k-\mu_{\vec{p}})^2 g(k)$
becomes
\begin{align*}
  J
  &= \int_0^1 \sum_{i=1}^n \Delta_i
       \sum_{k\in\mathbb Z} (k-\mu_{\vec{p}})^2 \P\bigl(T_i(t)=k-1\bigr)\,\mathrm{d}t.
\end{align*}
Changing variable $j=k-1$,
\[
  \sum_{k\in\mathbb Z} (k-\mu_{\vec{p}})^2 \P\bigl(T_i(t)=k-1\bigr)
  = \E_t\bigl[(T_i(t)+1-\mu_{\vec{p}})^2\bigr],
\]
where $\E_t$ denotes expectation under the product law with
parameters $\{r_j(t)\}_j$.
Hence
\begin{equation}\label{eq:J-Et}
  J
  = \int_0^1 \sum_{i=1}^n \Delta_i\,
       \E_t\bigl[(T_i(t)+1-\mu_{\vec{p}})^2\bigr]\,\mathrm{d}t.
\end{equation}
We bound $\E_t[(T_i(t)+1-\mu_{\vec{p}})^2]$ uniformly in $t,i$
via the decomposition
$\E[(Z-\alpha)]^2=\Var(Z)+(\E Z-\alpha)^2$,
where $Z=T_i(t)+1$ and $\alpha=\mu_{\vec{p}}$.
To bound the variance term, note that
$f(x)=x(1-x)$
is $1$-Lipschitz on $[0,1]$, and so
\[
  |f(r_j(t))-f(p_j)| \le |r_j(t)-p_j| = (1-t)\Delta_j.
\]
Hence
\[
  \Var_t(S(t))
  = \sum_j f(r_j(t))
  \le \sum_j f(p_j) + (1-t)\sum_j\Delta_j
  = \sigma_{\vec{p}}^2 + (1-t)\Delta
  \le \sigma_{\vec{p}}^2 + \Delta
  \le\sigma_{\vec{p}}^2 + 1 + \Delta^2.
\]
Since $T_i(t)=S(t)-X_i(t)$ with $X_i(t)$ Bernoulli,
\beqn
\label{eq:VarTi-bound}
  \Var_t(T_i(t))
  \le \Var_t(S(t))
  \le \sigma_{\vec{p}}^2 + 1 + \Delta^2.
\eeqn

To bound the bias term, we have
\[
  \E_t[S(t)] = \mu_{\vec{q}} + t\Delta,
\]
and $\mu_{\vec{p}}=\mu_{\vec{q}}+\Delta$, so $\E_t[S(t)]-\mu_{\vec{p}}=-(1-t)\Delta$.
Also $r_i(t)=q_i+t\Delta_i$, so
\[
  \E_t[T_i(t)]
  = \mu_{\vec{q}} + t\Delta - (q_i + t\Delta_i),
\]
and hence
\[
  M_i(t) := \E_t[T_i(t)+1-\mu_{\vec{p}}]
  = 1 - q_i - \Delta + t(\Delta-\Delta_i).
\]
It is straightforward to see that
$|M_i(t)| \le \max(1,\Delta)$
and thus,
\beq
  \E_t\bigl[(T_i(t)+1-\mu_{\vec{p}})^2\bigr]
  &=& \Var_t(T_i(t)) + M_i(t)^2
  \\
  &\le&
 (\sigma_{\vec{p}}^2+1+\Delta^2) + (\sigma_{\vec{p}}^2+1+\Delta^2)
  = 2(\sigma_{\vec{p}}^2+1+\Delta^2).
\eeq
Substituting into \eqref{eq:J-Et} gives
\[
  J
  \le \int_0^1 \sum_i \Delta_i\,
         2(\sigma_{\vec{p}}^2+1+\Delta^2)\,\mathrm{d}t
  = 2\,\Delta\,(\sigma_{\vec{p}}^2+1+\Delta^2),
\]
as claimed.
\end{proof}

\begin{corollary}
[Proof of Theorem \ref{thm:PB-lower-Phi}]
\label{cor:PB-global}
Let $(\vec{p},\vec{q})$ be a dominating pair with
laws $\mypoi{\vec{p}},\mypoi{\vec{q}}$,
sums $S_{\vec{p}},S_{\vec{q}}$, $\sigma_{\vec{p}}^2 := \Var(S_{\vec{p}})$,
and
$
  \Delta := \E S_{\vec{p}}-\E S_{\vec{q}}
$.
Then
\[
  \TV(\mypoi{\vec{p}},\mypoi{\vec{q}})
  \;\ge\;
  c\,
  \min\Bigl(1,\frac{\Delta}{\sqrt{\sigma_{\vec{p}}^2+1}}\Bigr),
\]
for some universal constant $c\ge\frac1{9}$.
\end{corollary}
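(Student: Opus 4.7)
The plan is to assemble the three ingredients that are already in place: the identity $G = \Delta$ and the trivial estimate $\TV(\mypoi{\vec{p}},\mypoi{\vec{q}}) \ge \sup_k g(k)$ noted just before Lemma~\ref{lem:J-global}; the second-moment bound $J \le 2\Delta(\sigma_{\vec{p}}^2+1+\Delta^2)$ from that lemma; and the pigeonhole conclusion of Lemma~\ref{lem:pigeonhole} applied to $g$ with center $\mu = m_{\vec{p}}$.

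Substituting $G = \Delta$ and the $J$-bound into the pigeonhole estimate $\sup_k g(k) \ge 3G^{3/2}/(16\sqrt{J}+4\sqrt{G})$ and factoring a common $\sqrt{\Delta}$ out of the denominator yields a single, clean inequality of the form
\[
\TV(\mypoi{\vec{p}},\mypoi{\vec{q}})
\;\ge\;
\frac{3\Delta}{16\sqrt{2}\sqrt{\sigma_{\vec{p}}^2+1+\Delta^2}+4}.
\]
From here I would split into two regimes keyed on the ratio $\Delta/\sqrt{\sigma_{\vec{p}}^2+1}$. In the small regime $\Delta \le \sqrt{\sigma_{\vec{p}}^2+1}$, I use $\sigma_{\vec{p}}^2+1+\Delta^2 \le 2(\sigma_{\vec{p}}^2+1)$ together with $\sqrt{\sigma_{\vec{p}}^2+1}\ge 1$ to collapse both denominator terms into a single multiple of $\sqrt{\sigma_{\vec{p}}^2+1}$, producing the bound $\Delta/(12\sqrt{\sigma_{\vec{p}}^2+1})$. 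In the large regime $\Delta > \sqrt{\sigma_{\vec{p}}^2+1}$, one has both $\sigma_{\vec{p}}^2+1 \le \Delta^2$ and $\Delta \ge 1$, so the denominator is bounded by $36\Delta$ and the whole expression exceeds $1/12$. Both regimes therefore exhibit the claimed constant $c = 1/12$ against the target $\min(1,\Delta/\sqrt{\sigma_{\vec{p}}^2+1})$.

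The only genuine obstacle is constant-tracking across the regime boundary: one must verify that $16\sqrt{2}$ and $4$ happen to be compatible so that the two bounds meet at $1/12$ precisely where $\Delta \asymp \sqrt{\sigma_{\vec{p}}^2+1}$. A brief arithmetic check confirms this, and the choice of center $\mu = m_{\vec{p}}$ (as opposed to, say, $m_{\vec{q}}$ or a midpoint) is essential, since the $J$-bound is tailored to this centering via the $M_i(t)$ computation in the proof of Lemma~\ref{lem:J-global}. Any further loss — for instance tightening the Chebyshev threshold $R = 2\sqrt{J/G}$ inside the pigeonhole lemma — would improve the universal constant beyond $1/12$ but is not needed for the stated bound.
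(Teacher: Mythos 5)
Your proposal is correct and follows exactly the paper's route: bound $\TV$ below by $\sup_k g(k)$ via the monotone-coupling representation, feed $G=\Delta$ and the $J$-bound of Lemma~\ref{lem:J-global} into the pigeonhole Lemma~\ref{lem:pigeonhole}, and finish with the two-case split on $\Delta$ versus $\sqrt{\sigma_{\vec{p}}^2+1}$, using $\sqrt{\sigma_{\vec{p}}^2+1}\ge1$ (resp.\ $\Delta\ge1$) to absorb the additive $4$ into a $36$-fold multiple and land on the constant $1/12$. The one small imprecision is the remark about the bounds \enquote{meeting at the boundary}: the two regimes each yield $1/12$ times the respective branch of $\min(1,\Delta/\sqrt{\sigma_{\vec{p}}^2+1})$ independently, so no matching condition across the boundary is actually needed.
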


\begin{proof}
Recall $g(k):=\P(S_{\vec{p}}\ge k)-\P(S_{\vec{q}}\ge k)\ge0$ and
\[
  \Delta = \sum_k g(k),\qquad
  J = \sum_k (k-\mu_{\vec{p}})^2 g(k),
  \qquad \mu_{\vec{p}}=\E S_{\vec{p}}.
\]
By \eqref{eq:TV-ge-supg},
$
  \TV(\mypoi{\vec{p}},\mypoi{\vec{q}})
  \ge \sup_{k\in\mathbb{Z}} g(k)$.
By Lemma~\ref{lem:J-global},
\[
  J \;\le\; 2\,\Delta\,(\sigma_{\vec{p}}^2+1+\Delta^2).
\]
Apply Lemma~\ref{lem:pigeonhole} with $G=\Delta$
to obtain
\[
  \sup_k g(k)
  \;\ge\;
  \frac{\Delta}{4\sqrt{2(\sigma_{\vec{p}}^2+1+\Delta^2)} + 1}.
\]
We compare $\sqrt{\sigma_{\vec{p}}^2+1+\Delta^2}$ with
$\sqrt{\sigma_{\vec{p}}^2+1}$ and consider the two cases.
If
$\Delta \le \sqrt{\sigma_{\vec{p}}^2+1}$,
then $\Delta^2\le\sigma_{\vec{p}}^2+1$, so
$
  \sigma_{\vec{p}}^2+1+\Delta^2 \le 2(\sigma_{\vec{p}}^2+1),
$
and
\begin{align*}
  \TV(\mypoi{\vec{p}},\mypoi{\vec{q}})
  &\ge \frac{\Delta}{4\sqrt{2}\sqrt{2(\sigma_{\vec{p}}^2+1)}+1}
  = \frac{\Delta}{8\sqrt{\sigma_{\vec{p}}^2+1}+1} %
\ge
\frac{\Delta}{9\sqrt{\sigma_{\vec{p}}^2+1}}
  .
\end{align*}
If
$\Delta \ge \sqrt{\sigma_{\vec{p}}^2+1}$,
then $\Delta^2\ge\sigma_{\vec{p}}^2+1$, so
$
  \sigma_{\vec{p}}^2+1+\Delta^2 \le 2\Delta^2,
$
and
\begin{align*}
  \TV(\mypoi{\vec{p}},\mypoi{\vec{q}})
  &\ge \frac{\Delta}{4\sqrt{2}\sqrt{2\Delta^2}+1}
  = \frac{\Delta}{8\Delta+1}
  \ge \frac19
  .
\end{align*}
Combining the two cases proves the claim.
\end{proof}

\subsection{Proof of Theorem \ref{thm:tv-ber-lb}}
Both inequalities invoke the data processing inequality;
the first in the form
\beq
\TV(P\otimes P',Q\otimes Q')
&\ge&
\max(
\TV(P,Q)
,
\TV(P',Q')
),
\eeq
and the second in the form
$
\TV(\Ber(\vec{p}),\Ber(\vec{q}))
\ge
\TV(S_{\vec{p}},S_{\vec{q}})
$.
Given these, the claim is
an immediate consequence of Theorem~\ref{thm:PB-lower-Phi}.

\subsection{Proof of Lemma \ref{lem:bin-homog}}
\newcommand{\nI}{|I|}
\newcommand{\nJ}{|J|}

\begin{proof}
We begin by representing $\mybin{n}{\bar p_N}$ as a mixture.
Let $M\sim\mybin{n}{w}$ where $w = \nI/n$. Conditionally on $M=m$, let
\[
U_m\sim\mybin{m}{\bar p_I},\qquad V_m\sim\mybin{n-m}{\bar p_J}
\]
be independent and set $S_m:=U_m+V_m$.
We claim that $S_M\sim\mybin{n}{\bar p_N}$.
Indeed, for $t\in\R$, put $s:=1-\bar p_I+\bar p_I t$ and $r:=1-\bar p_J+\bar p_J t$.
Compute the probability generating function conditional on $M$:
\[
\E[t^{S_M}\mid M]
=\E[t^{U_M+V_M}\mid M]
=\E[t^{U_M}\mid M]\;\E[t^{V_M}\mid M]
=s^{\,M}r^{\,n-M},
\]
so $\E[t^{S_M}]=\E[s^M r^{n-M}]$.
Represent $M=\sum_{i=1}^n X_i$ where
$X_1,\dots,X_n$ are i.i.d.\ $\Ber(w)$. Then $n-M=\sum_{i=1}^n(1-X_i)$ and
\[
s^M r^{n-M}
=\prod_{i=1}^n s^{X_i}r^{1-X_i}
=\prod_{i=1}^n\bigl(r+(s-r)X_i\bigr).
\]
Taking expectation and using independence,
\[
\E[s^M r^{n-M}]
=\prod_{i=1}^n \E\bigl[r+(s-r)X_i\bigr]
=\bigl(r+(s-r)\E[X_1]\bigr)^n
=\bigl((1-w)r+ws\bigr)^n.
\]
Substituting $r=1-\bar p_J+\bar p_J t$ and $s=1-\bar p_I+\bar p_I t$ gives
\[
\E[t^{S_M}]
=\bigl(w(1-\bar p_I+\bar p_I t)+(1-w)(1-\bar p_J+\bar p_J t)\bigr)^n
=(1-\bar p_N+\bar p_N\,t)^n,
\]
which is the PGF of $\mybin{n}{\bar p_N}$.
Since a distribution with finite support is characterized by its PGF,
$S_M\sim\mybin{n}{\bar p_N}$.
Similarly, if $U'_m\sim\mybin{m}{\bar q_I}$ and $V'_m\sim\mybin{n-m}{\bar q_J}$ are independent and
$S'_m:=U'_m+V'_m$, then the same calculation yields $S'_M\sim\mybin{n}{\bar q_N}$.
Therefore
\[
\delta_{N}=\TV(S_M,S'_M).
\]

For any event $A\subset\{0,1,\dots,n\}$,
\[
\P(S_M\in A)-\P(S'_M\in A)
=\E\!\left[\P(S_M\in A\mid M)-\P(S'_M\in A\mid M)\right]
\le \E\!\left[\TV(\Law(S_M\mid M),\Law(S'_M\mid M))\right],
\]
and taking the supremum over $A$ yields
\[
\TV(S_M,S'_M)\le \E\!\left[\TV(\Law(S_M\mid M),\Law(S'_M\mid M))\right]
=\sum_{m=0}^n \P(M=m)\,\TV(S_m,S'_m).
\]

Next, the map $(u,v)\mapsto u+v$ is a Markov kernel, and so
\[
\TV(S_m,S'_m)\le \TV\bigl((U_m,V_m),(U'_m,V'_m)\bigr).
\]
Moreover, for product measures one has
\begin{equation}
\label{eq:tv_product_bound}
\TV(P\times R,\;Q\times S)\le \TV(P,Q)+\TV(R,S),
\end{equation}
whence
\[
\TV(S_m,S'_m)\le \TV(U_m,U'_m)+\TV(V_m,V'_m).
\]
Define
\[
\tau_I(m):=\TV(\mybin{m}{\bar p_I},\mybin{m}{\bar q_I}),\qquad
\tau_J(k):=\TV(\mybin{k}{\bar p_J},\mybin{k}{\bar q_J}).
\]
Then
\[
\delta_{N}\le \E[\tau_I(M)]+\E[\tau_J(n-M)].
\]
It remains to estimate the two expectations.
Fix $\theta,\theta'\in[0,1]$ and define $f:\N\to\R$ by
\[
f(m):=\TV(\mybin{m}{\theta},\mybin{m}{\theta'}).
\]
We claim that $f$ is nondecreasing and subadditive.
To show subadditivity,
let $X\sim\mybin{m}{\theta}$, $Y\sim\mybin{k}{\theta}$ be independent so that
$X+Y\sim\mybin{m+k}{\theta}$, and similarly let $X'\sim\mybin{m}{\theta'}$,
$Y'\sim\mybin{k}{\theta'}$ independent so that $X'+Y'\sim\mybin{m+k}{\theta'}$.
Then
\[
f(m+k)=\TV(X+Y,\;X'+Y')\le \TV(X,X')+\TV(Y,Y')=f(m)+f(k).
\]
To show
monotonicity, observe that the following is a Markov kernel
that maps $\mybin{m+1}{\theta}$ to $\mybin{m}{\theta}$: interpret $\mybin{m+1}{\theta}$ as the number of successes in $m+1$ i.i.d.\ $\Ber(\theta)$ trials and delete one uniformly random trial.
Conditional on seeing $k$ successes among $m+1$ trials, the remaining number of successes equals $k$ with probability $\frac{m+1-k}{m+1}$ (deleted a failure) and equals $k-1$ with probability $\frac{k}{m+1}$ (deleted a success).
This kernel does not depend on $\theta$, and it sends $\mybin{m+1}{\theta}$ to $\mybin{m}{\theta}$ for every $\theta$.
Therefore, by TV contraction under the same kernel,
\[
f(m) = \TV(T_m(\mybin{m+1}{\theta}), T_m(\mybin{m+1}{\theta'})) \le \TV(\mybin{m+1}{\theta}, \mybin{m+1}{\theta'}) = f(m+1).
\]

Now let $m_0\ge 1$ be an integer. By subadditivity and monotonicity,
for every $m\ge 0$, writing $m=km_0+r$ with $0\le r<m_0$,
\[
f(m)\le kf(m_0)+f(r)\le (k+1)f(m_0)=\left\lceil\frac{m}{m_0}\right\rceil f(m_0).
\]
Hence, for any integer--valued $M$ with $\E M=m_0$, using $\lceil x\rceil\le x+1$,
\[
\E[f(M)]
\le f(m_0)\,\E\!\left[\left\lceil\frac{M}{m_0}\right\rceil\right]
\le f(m_0)\,\E\!\left[\frac{M}{m_0}+1\right]
=2f(m_0).
\]

Apply this with $f=\tau_I$ and $m_0=\nI$ (note $\E M=nw=\nI$) to get
\[
\E[\tau_I(M)]\le 2\tau_I(\nI)=2\delta_I,
\]
and with $f=\tau_J$ and $m_0=\nJ$ (since $\E(n-M)=\nJ$) to get
\[
\E[\tau_J(n-M)]\le 2\tau_J(\nJ)=2\delta_J.
\]
This completes the proof.
\end{proof}

\begin{appendix}

\section{Majorization kernels for complementary Bernoulli products}
\label{app:majorization}

Fix $n\ge2$ and write $\Omega:=\{0,1\}^n$.
For a parameter vector $\alpha=(\alpha_1,\dots,\alpha_n)\in[0,1]^n$ and $s\in\{+,-\}$ define the product measures
\[
\mu_{\alpha}^{\,s}
:=\bigotimes_{i=1}^n \Ber\!\left(\frac12+s\,\frac{\alpha_i}{2}\right).
\]
Thus $\mu_{\alpha}^{\,+}$ and $\mu_{\alpha}^{\,-}$ are complementary in the sense that
$\mu_{\alpha}^{\,-}$ is obtained from $\mu_{\alpha}^{\,+}$ by flipping each bit.

We show that, in this complementary setting, not only homogenization but any
\emph{majorization} of the parameter vector can be realized by a Markov kernel.
We write $\beta\prec\alpha$ to denote that $\beta$ is \emph{majorized} by $\alpha$, i.e., letting
$\alpha^{\downarrow}$ and $\beta^{\downarrow}$ be the vectors obtained by sorting the coordinates of
$\alpha$ and $\beta$ in nonincreasing order, we have
\[
\sum_{i=1}^k \beta_i^{\downarrow}\ \le\ \sum_{i=1}^k \alpha_i^{\downarrow}
\quad\text{for all }k=1,\dots,n-1,
\qquad\text{and}\qquad
\sum_{i=1}^n \beta_i^{\downarrow}\ =\ \sum_{i=1}^n \alpha_i^{\downarrow}.
\]
(see \cite[Ch.~1]{marshall2011inequalities}).
We thank L. Mattner \cite{Mattner2026} for this suggestion;
our original result had proven Lemma~\ref{lem:Ttransform-2coord}
only for $\lambda=\frac12$
and Theorem~\ref{thm:majorization-kernel} only for homogenization
via a compactness argument.

\subsection*{A two--coordinate kernel for a general $T$--transform}
The basic step is a Markov kernel acting on two coordinates which implements the
standard $T$--transform (a convex combination of the identity and a transposition).

\begin{lemma}[Two--coordinate $T$--transform kernel]
\label{lem:Ttransform-2coord}
Fix $a,b\in[0,1]$ and $\lambda\in[0,1]$, and define
\[
a' := \lambda a + (1-\lambda)b,
\qquad
b' := \lambda b + (1-\lambda)a.
\]
Let
\[
\eta := 
\begin{cases}
\displaystyle \frac{\lambda(1-\lambda)(a-b)^2}{2(1-ab)}, & ab<1,\\
[1ex]
0, & ab=1,
\end{cases}
\]
and consider the row--stochastic $4\times4$ matrix (indexed by $00,01,10,11$)
\begin{equation}
\label{eq:Mlambda}
M_{\lambda}(a,b):=
\left(\begin{array}{cccc}
1 & 0 & 0 & 0 \\
\eta & \lambda-\eta & (1-\lambda)-\eta & \eta \\
\eta & (1-\lambda)-\eta & \lambda-\eta & \eta \\
0 & 0 & 0 & 1 
\end{array}\right).
\end{equation}
Then all entries of $M_{\lambda}(a,b)$ are nonnegative, and for both $s\in\{+,-\}$,
\[
\Bigl(\Ber\!\left(\tfrac12+s\,\tfrac a2\right)\otimes \Ber\!\left(\tfrac12+s\,\tfrac b2\right)\Bigr)\,M_{\lambda}(a,b)
=
\Ber\!\left(\tfrac12+s\,\tfrac {a'}2\right)\otimes \Ber\!\left(\tfrac12+s\,\tfrac {b'}2\right).
\]
\end{lemma}

\begin{proof}
Row--stochasticity is immediate from \eqref{eq:Mlambda}.
For nonnegativity, note that for $a,b\in[0,1]$,
\[
2(1-ab)-(a-b)^2 = 2-a^2-b^2 \ge 0,
\]
hence $(a-b)^2\le 2(1-ab)$ whenever $ab<1$. Therefore
$\eta\le \lambda(1-\lambda)\le \min\{\lambda,1-\lambda\}$, which implies
$\lambda-\eta\ge0$ and $(1-\lambda)-\eta\ge0$.

To verify the mapping property, write (for $s\in\{+,-\}$)
$p_s:=\tfrac12+s\tfrac a2$, $q_s:=\tfrac12+s\tfrac b2$ and
$p_s':=\tfrac12+s\tfrac {a'}2$, $q_s':=\tfrac12+s\tfrac {b'}2$.
Let $x_s$ be the stochastic row vector of the input law
$\Ber(p_s)\otimes\Ber(q_s)$ ordered as $(00,01,10,11)$, and similarly let $y_s$
be that of $\Ber(p_s')\otimes\Ber(q_s')$.
A direct (routine) calculation using \eqref{eq:Mlambda} shows that
$x_s\,M_{\lambda}(a,b)=y_s$ for $s=+$ and for $s=-$. 
\end{proof}

\subsection*{Finite decomposition via majorization}
Lemma~\ref{lem:Ttransform-2coord} supplies the probabilistic implementation
of a single $T$--transform. The following theorem is the desired global statement.

\begin{theorem}[Majorization kernel for complementary products]
\label{thm:majorization-kernel}
Fix $n\ge2$ and $\alpha,\beta\in[0,1]^n$ with $\beta\prec\alpha$.
Then there exists a Markov kernel $K:\Omega\to\mathcal P(\Omega)$ such that
simultaneously, for both $s\in\{+,-\}$,
\[
\mu_{\alpha}^{\,s}K=\mu_{\beta}^{\,s}.
\]
\end{theorem}

\begin{proof}
By the Muirhead--Hardy--Littlewood--P\'olya lemma
(see \cite[Lemma~B.1]{marshall2011inequalities}),
there exist indices $(i_t,j_t)$ and parameters $\lambda_t\in[0,1]$, $t=1,\dots,m$,
such that if we define $\alpha^{(0)}:=\alpha$ and for each $t\ge1$ update only
coordinates $(i_t,j_t)$ via
\[
\bigl(\alpha^{(t)}_{i_t},\alpha^{(t)}_{j_t}\bigr)
=
\bigl(\lambda_t\alpha^{(t-1)}_{i_t}+(1-\lambda_t)\alpha^{(t-1)}_{j_t},\ 
\lambda_t\alpha^{(t-1)}_{j_t}+(1-\lambda_t)\alpha^{(t-1)}_{i_t}\bigr),
\]
with all other coordinates unchanged, then $\alpha^{(m)}=\beta$.

For each step $t$, let $K_t$ be the Markov kernel on $\Omega$ that acts only on
coordinates $(i_t,j_t)$ as follows: given $x\in\Omega$, sample
\[
(x'_{i_t},x'_{j_t})\sim M_{\lambda_t}\!\left(\alpha^{(t-1)}_{i_t},\alpha^{(t-1)}_{j_t}\right)(x_{i_t},x_{j_t})
\]
(using Lemma~\ref{lem:Ttransform-2coord}), and set $x'_k=x_k$ for $k\neq i_t,j_t$.
By independence of coordinates and Lemma~\ref{lem:Ttransform-2coord},
for both $s\in\{+,-\}$ we have
\[
\mu_{\alpha^{(t-1)}}^{\,s}K_t=\mu_{\alpha^{(t)}}^{\,s}.
\]
Finally, define the composed kernel $K:=K_1K_2\cdots K_m$. Iterating the last display yields
$\mu_{\alpha}^{\,s}K=\mu_{\beta}^{\,s}$ for both $s\in\{+,-\}$.
\end{proof}

\begin{corollary}[Homogenization as a special case]
\label{cor:homog-majorization}
Let $\alpha\in[0,1]^n$ and put $\bar\alpha:=\frac1n\sum_{i=1}^n \alpha_i$.
Then there exists a Markov kernel $K$ on $\Omega$ such that for both $s\in\{+,-\}$,
\[
\mu_{\alpha}^{\,s}K=\mu_{\bar\alpha\mathbf 1}^{\,s}.
\]
\end{corollary}

\begin{proof}
It is immediate that $\bar\alpha\mathbf 1 \prec \alpha$. Apply
Theorem~\ref{thm:majorization-kernel} with $\beta=\bar\alpha\mathbf 1$.
\end{proof}

\end{appendix}

\paragraph{Acknowledgments}
Some of the proofs were assisted by ChatGPT Pro.
We thank Sébastien Bubeck, Mark Sellke and Nikita Zhivotovskiy
for pointing out errors in earlier versions.
We are grateful to Lutz Mattner and Bero Roos
for detailed and insightful comments on the manuscript.

This research was supported in part by 
the Israel Science Foundation ISF grant
581/25
and the Binational Science Foundation BSF grant
2024243.

\end{document}